\documentclass[10pt]{amsart}
\allowdisplaybreaks

\usepackage{amsfonts}
\usepackage{amsmath}
\usepackage{amssymb}
\usepackage{amsthm}
\usepackage{bbm}
\usepackage{amsbsy}
\usepackage{xcolor}
\usepackage{difftrees}
\usepackage{graphicx} \usepackage{enumerate} \usepackage{multicol}
\usepackage{mathrsfs} \usepackage[all,cmtip]{xy}
\usepackage[color=blue!20,textsize=footnotesize]{todonotes}

\newcounter{dummy} \numberwithin{dummy}{section}
\newtheorem{theorem}[dummy]{Theorem}

\newtheorem{definition}[dummy]{Definition}
\newtheorem{proposition}[dummy]{Proposition}
\theoremstyle{remark}
\newtheorem{remark}[dummy]{Remark}
\newtheorem{example}[dummy]{Example}


\newcommand{\calC}{\mathcal{C}}

\newcommand{\calH}{\mathcal{H}}

\newcommand{\calV}{\mathcal{V}}
\newcommand{\calT}{\mathcal{T}}

\newcommand{\mathfrakHol}{\mathfrak{Hol}}
\newcommand{\mathfrakX}{\mathfrak{X}}
\newcommand{\mathfrakT}{\mathfrak{T}}






\DeclareMathOperator{\Diff}{Diff}

\DeclareMathOperator{\id}{id}

\DeclareMathOperator{\spn}{span}


\DeclareMathOperator{\Hol}{Hol}
\DeclareMathOperator{\hol}{\mathfrak{hol}}

\DeclareMathOperator{\Ad}{Ad}

\DeclareMathOperator{\GL}{GL}
\DeclareMathOperator{\PostLie}{PostLie}
\DeclareMathOperator{\Tree}{Tree}
\DeclareMathOperator{\Lie}{Lie}
\DeclareMathOperator{\gl}{\mathfrak{gl}}

\newcommand{\bbOne}{\mathbbm{1}}

\newcommand{\ve}{\varepsilon}

\numberwithin{equation}{section}

\title[Post-Lie algebra structure of manifolds]{Post-Lie algebra structure of manifolds with constant curvature and torsion}
\author[E.~Grong, H.~Z..~Munthe-Kaas and J.~Stava]{Erlend Grong, Hans Z. Munthe-Kaas and Jonatan Stava}

\address{University of Bergen, Department of Mathematics, P.O.~Box 7803, 5020 Bergen, Norway}
\email{erlend.grong@uib.no}
\email{Hans.Munthe-Kaas@uib.no}
\email{Jonatan.Stava@uib.no}

\subjclass[2020]{53C05, 41A58, 53C30,17D99}

\keywords{Post-Lie algebras, connections, locally homogeneous spaces spaces}

\thanks{The first author is supported by the grant GeoProCo from the Trond Mohn Foundation - Grant TMS2021STG02 (GeoProCo). All authors are supported by the Research Council of Norway through project 302831 Computational Dynamics and Stochastics on Manifolds (CODYSMA)}

\begin{document}

\begin{abstract}
For a general affine connection with parallel torsion and curvature, we show that a post-Lie algebra structure exists on its space of vector fields, generalizing previous results for flat connections. However, for non-flat connections, the vector fields alone are not enough, as the presence of curvature also necessitates that we include endomorphisms corresponding to infinitesimal actions of the holonomy group. We give details on the universal Lie algebra of this post-Lie algebra and give applications for solving differential equations on manifolds.
\end{abstract}

\maketitle

\section{Introduction}
When dealing with numerical solutions of differential equations on manifolds, the major prevalent method in a non-flat space has been through either embeddings or the use of charts. The focus of Geometric Numerical Integration (GNI) algorithms is rather on using global methods, while at the same time conserving the underlying geometric structure. This approach can be used to obtain the solution of discretized versions of both ODEs and PDEs.

One of the main successes of GNI has been its application on Lie groups, see e.g. \cite{LieGMethod,LG1,LG2,LG3}. Lie groups can locally be considered as manifolds equipped with an affine connection $\nabla$ that is flat, i.e., has curvature equal to zero, but possibly with a torsion which is parallel. We can obtain such a connection on a Lie group by defining all left invariant vector fields to be parallel. Conversely, if $\nabla$ is a flat connection with parallel torsion, then all parallel vector fields form a finite dimensional Lie algebra that can be integrated locally to a group structure.

The space of smooth vector fields $\mathfrak{X}_M$ on a manifold $M$ with a connection $\nabla$ can be considered as an algebra over $\mathbb{R}$ called the connection algebra. 
In the case where both the torsion and the curvature are zero, the connection algebra is a pre-Lie algebra. In the case described above, where the curvature is zero while the torsion is parallel, the connection algebra is a post-Lie algebra, see Definition \ref{def:post-Lie}. Post-Lie algebras play an essential part in GNI and many of its methods are based on this algebraic structure. 

In this paper we want to consider a more general type of manifolds that supports an affine connection $\nabla$ which is not necessarily flat, but have parallel curvature and torsion. These can be considered as a local representation of homogeneous spaces. We will show that there exists an underlying post-Lie algebra structure governing vector fields of these spaces as well, as long as we also include endomorphisms corresponding to infinitesimal transformations of the holonomy group.

Pre-Lie algebras is the natural algebraic framework for flat torsion-free connections \cite{CALAQUE2011282}, and post-Lie is the main framework for flat connections with parallel torsion \cite{MK_Wright_08, MKL13}. For a connection $\nabla$ with parallel torsion and curvature there is a subbundle of the frame bundle the sections of which can be considered as a post-Lie algebra which is invariant under the action of a group $H$. This group is isomorphic to the holonomy group of $\nabla$. Rather than using a more algebraic approach such as what is outlined in \cite{al2022algebraic}, we will use the relationship between a manifold and its frame bundle to obtain our realization of the post-Lie algebra structure in an explicit form.

The main result we obtain is the following, which we will prove in Section~\ref{sec:Proof}.

\begin{theorem} \label{th:main}
Let $(M,\nabla)$ be an arbitrary connected affine manifold such that both its curvature $R$ and torsion $T$ are parallel. Let $\Hol_p$ be the holonomy group of $\nabla$ at $p \in M$ with Lie algebra $\hol_p$ and consider the sub-vector bundle $\hol\subseteq T^*M \otimes TM$ of the endomorphisms formed by these Lie algebras.

Let $\mathfrak{X}_M = \Gamma(TM)$ be vector fields on $M$ and $\mathfrak{Hol}_M$ sections of its holonomy bundle. Define binary operations $[ \cdot, \cdot ]$ and $\rhd$ on $\hat{\mathfrak{X}}_M := \mathfrak{X}_M \oplus \mathfrak{Hol}_M$ such that if $x,y \in \mathfrak{X}_M$ and $E,E_1,E_2 \in \mathfrak{Hol}_M$, then
    \begin{align*}
    x \rhd y & = \nabla_x y,  &  [x,y] & =  - T(x,y) + R(x,y), \\
    x \rhd E & = \nabla_x E, &  [E,x] & = -Ex, \\
    E \rhd y & = E y, &  [E_1, E_2]  & = - E_1 E_2 + E_2 E_1. \\
    E_1 \rhd E_2 & = E_1 E_2 - E_2 E_1,
    \end{align*}
Then $(\hat{\mathfrak{X}}_M, [\cdot, \cdot ], \rhd)$ is a post-Lie algebra.
\end{theorem}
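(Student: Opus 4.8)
The plan is to verify directly the two post-Lie axioms of Definition~\ref{def:post-Lie}: that $(\hat{\mathfrak{X}}_M, [\cdot,\cdot])$ is a Lie algebra with each $a \rhd$ a derivation of $[\cdot,\cdot]$ (call this (PL1)), and that the associator $a_\rhd(a,b,c) := a \rhd (b \rhd c) - (a \rhd b) \rhd c$ satisfies $[a,b] \rhd c = a_\rhd(a,b,c) - a_\rhd(b,a,c)$ (call this (PL2)). Since all operations are $\R$-bilinear and $\hat{\mathfrak{X}}_M = \mathfrak{X}_M \oplus \mathfrak{Hol}_M$, each identity reduces to a short list of cases indexed by which arguments lie in $\mathfrak{X}_M$ and which in $\mathfrak{Hol}_M$.

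First I would record the structural inputs. Because $R$ and $T$ are parallel, parallel transport carries $\hol_p$ isomorphically onto $\hol_q$, so $\hol$ is a parallel Lie-algebra sub-bundle of $\mathrm{End}(TM)$; in particular $\mathfrak{Hol}_M$ is closed under $E \mapsto \nabla_x E$ and under the endomorphism commutator, and by Ambrose--Singer $R(x,y)$ is a section of $\hol$, so every operation in the statement is well defined. Next, $\nabla$ obeys the Leibniz rule on compositions and tensorial contractions, and the curvature operator of the induced connection on $\mathrm{End}(TM)$ acts as $E \mapsto [R(x,y),E]$. Since $R$ and $T$ are parallel they are holonomy-invariant, and differentiating this invariance yields, for $E \in \hol_p$, the infinitesimal identities $E\,T(x,y) = T(Ex,y) + T(x,Ey)$ and $[E,R(x,y)] = R(Ex,y) + R(x,Ey)$. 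Finally, parallelism collapses the two Bianchi identities to $\sum_{\mathrm{cyc}} R(x,y)z = \sum_{\mathrm{cyc}} T(T(x,y),z)$ and $\sum_{\mathrm{cyc}} R(T(x,y),z) = 0$.

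With these in hand, Step~1 is antisymmetry of $[\cdot,\cdot]$, immediate from the defining formulas (note $[E_1,E_2]$ is minus the endomorphism commutator and $[E,x] = -Ex = -[x,E]$). Step~2 is the Jacobi identity, organised by the number of endomorphism arguments. The case of three vector fields is the crux: expanding $[z,[x,y]]$ produces a torsion-of-torsion term, an $R$-of-torsion term, and a term $R(x,y)z$, and the cyclic sum vanishes precisely by the two collapsed Bianchi identities. The case of two vector fields and one endomorphism splits into $\mathfrak{X}_M$- and $\mathfrak{Hol}_M$-components that vanish by the infinitesimal holonomy-invariance identities for $T$ and $R$ respectively; the cases with two or three endomorphisms reduce to the Jacobi identity for the endomorphism commutator. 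Step~3 verifies (PL1) case by case, each following mechanically from the Leibniz rules for $\nabla$ and the two infinitesimal invariance identities (which re-enter exactly when $a$ is an endomorphism and $b,c$ are vector fields).

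Step~4 verifies (PL2). The pure vector-field case, and the case of two vector fields with an endomorphism in the third slot, are where the defining formulas $T(x,y) = \nabla_x y - \nabla_y x - [x,y]$ and $R(x,y) = \nabla_x\nabla_y - \nabla_y\nabla_x - \nabla_{[x,y]}$ (the latter also in its $\mathrm{End}(TM)$-form) are used to rewrite the antisymmetrised associator as $R(x,y) \rhd c - \nabla_{T(x,y)} c$, which is exactly $[x,y] \rhd c$; the remaining cases are bookkeeping with the Leibniz rule and the commutator identities. The main obstacle is not any single computation but keeping the bilinear case analysis organised and, in the vector-field cases, invoking the correct curvature-with-parallel-torsion form of the Bianchi identities — this is the one place where the hypothesis that $R$ and $T$ are parallel is genuinely essential rather than merely convenient.
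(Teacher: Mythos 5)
Your proposal is correct in strategy, and the key identities you isolate are the right ones, but it takes a genuinely different route from the paper. The paper does not verify the axioms on $M$ at all: it lifts to the holonomy sub-bundle $N\subseteq FM$ of frames reachable by parallel transport from a fixed frame $u^o$, shows that the canonical connection $\tilde\nabla$ there (for which the fields $H_a$ and $\xi_A$, $A\in\mathfrak h$, are parallel) is flat with parallel torsion precisely because $\nabla T=\nabla R=0$, invokes the known result that vector fields on such a manifold form a post-Lie algebra, and then identifies $\hat{\mathfrak X}_M=\mathfrak X_M\oplus\mathfrak{Hol}_M$ with the sub-post-Lie algebra $\mathfrak X_N^H$ of $H$-equivariant vector fields $H_{\bar x}+\xi_{\bar E}$; all the bracket and $\rhd$ formulas in the theorem are then read off from the relations \eqref{xiHbrackets}. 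Your direct verification replaces this with a case analysis on $M$, and the checks go through: the $\mathfrak X_M$- and $\mathfrak{Hol}_M$-components of the three-vector-field Jacobi identity are exactly the first and second Bianchi identities in their parallel-torsion/parallel-curvature form, the mixed Jacobi cases and the derivation property of $E\rhd$ are the infinitesimal holonomy-invariance identities $ET(x,y)=T(Ex,y)+T(x,Ey)$ and $[E,R(x,y)]=R(Ex,y)+R(x,Ey)$ (which the paper obtains as the scalarized identities on $N$), and the (PL2) cases reduce to the definitions of $T$ and $R$ and the induced curvature $E\mapsto[R(x,y),E]$ on $\End(TM)$. What your approach buys is a self-contained argument on $M$ that makes visible which geometric identity is responsible for which axiom; what it loses is the conceptual explanation of where the structure comes from and the frame-bundle machinery that the rest of the paper (equivariant functions, the description of $U(\hat{\mathfrak X}_M)$, the flow formulas) builds on. Two small cautions if you write it out in full: your formula $R(x,y)=\nabla_x\nabla_y-\nabla_y\nabla_x-\nabla_{[x,y]}$ must use the Jacobi bracket $[x,y]_J$, not the post-Lie bracket of the theorem; and you should justify that $\hol$ is a parallel sub-bundle closed under $\nabla_x$ (parallel transport conjugates holonomy groups, so this holds) so that $x\rhd E=\nabla_xE$ really lands in $\mathfrak{Hol}_M$.
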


\begin{remark}
    The bracket notation $[\cdot,\cdot]$ used for the operation defined in the Theorem is chosen to emphasise the fact that this operation is a Lie bracket that generalize the post-Lie bracket on the vector fields of Lie groups for which we use the same notation, see Section \ref{sec:PostLie}. This bracket should not be confused with the Jacobi bracket on vector fields for which the notation $[\cdot , \cdot]_J$ is used.
\end{remark}

The structure of the paper is as follows. In Section~\ref{sec:Prelim}, we will discuss post-Lie algebras and how they are related to flat connections with parallel torsion. We discuss its universal enveloping algebra and also discuss the effect of having a group action of the post-Lie algebra of vector fields. In Section~\ref{sec:PLFrame} we will discuss how vector fields on a non-flat affine manifold $(M,\nabla)$ can be given a post-Lie algebra structure by lifting them to a manifold of frames $N$ with a flat connection. Finally, Section~\ref{sec:Num} reviews the free post-Lie algebra and consider applications of the post-Lie algebra structure to solutions of ODEs on manifolds.

\section{Flat connections, post-Lie algebras and group action}  \label{sec:Prelim}
\subsection{Post-Lie algebras and flat connections} \label{sec:PostLie}
We start with the definition of the algebraic objects under consideration.
\begin{definition}\label{def:post-Lie}
A post-Lie algebra is a triple $(\mathfrak{A}, [ \cdot, \cdot ], \rhd)$, where $(\mathfrak{A}, [\cdot , \cdot])$ is a Lie algebra and $\rhd: \mathfrak{A} \otimes \mathfrak{A} \to \mathfrak{A}$ is a binary product with associator $a_\rhd$
$$a_\rhd(x,y,z) = x \rhd (y \rhd z) - (x \rhd y) \rhd z,$$
such that for any $x,y,z \in \mathfrak{A}$,
\begin{align*}
x \rhd [y,z] & = [x \rhd y, z] + [y, x \rhd z] , \\
[x,y] \rhd z & = a_{\rhd}(x,y,z) - a_{\rhd}(y,x,z) .
\end{align*}
\end{definition}
\noindent Such structures have many applications in numerics, see e.g. \cite{CEFMK19} for more details. In particular, the free post-Lie algebra is of importance, see Section~\ref{sec:FreePostLie}.

Post-Lie algebras appear in the following way in relation to geometry. Let $(N, \nabla)$ be a smooth manifold endowed with an affine connection, where the connection $\nabla$ has curvature $R$ and torsion $T$. We consider the space of all vector fields $\mathfrak{X}_N = \Gamma(TN)$ on $N$. Write $[x,y]_J = x*y - y*x$ for the Jacobi bracket of vector fields, where $x*y$ is the second order operator
$$(x*y)\phi = x(y\phi).$$
We use the affine connection $\nabla$ to define an operation $x \rhd y = \nabla_x y$ on $\mathfrakX_N$. Assume that the curvature $R =0$ vanishes and that the torsion $T$ is parallel. We define the bracket $[ \cdot , \cdot ]$ by
$$T(x,y) = - [x,y] = x \rhd y - y \rhd x - [ x,  y]_J.$$
The triplet $(\mathfrak{X}_N, [ \cdot, \cdot], \rhd)$ is a post-Lie algebra, see \cite[Section~2]{MKL13}. Note that the bracket presented in Theorem  \ref{th:main} is a generalization of the bracket defined here.

\subsection{Universal enveloping algebra of post-Lie algebras} \label{sec:Universal}
If we have a post-Lie algebra, then we can extend its structure to its universal enveloping algebra. Let $U(\mathfrak{X}_N)$ be the universal enveloping algebra of the Lie algebra $(\mathfrak{X}_N, [ \cdot , \cdot ])$, and denote its group operation by $\cdot$ such that $[x,y] = x\cdot y - y\cdot x$. We can then extend $\rhd$ to $U(\mathfrak{X}_N)$ according to the rules
\begin{equation} \label{ExtendU} \left\{ \begin{split}
\bbOne \rhd x &= x, \\
x \rhd (y\cdot  z)  & = (x \rhd y) \cdot  z + y \cdot  (x \rhd z), \\
(x\cdot y) \rhd z & = x \rhd (y \rhd z) - (x \rhd y) \rhd z,
\end{split} \right.
\end{equation}
making $(U(\mathfrak{X}_N), \cdot , \rhd)$ a D-algebra. We can identify this D-algebra with the space of all differential operator tensors on $N$. Let $\mathfrak{T}_N = \oplus_{j=0}^\infty \mathfrak{X}^{\otimes j}_N$ be the tensor algebra of $\mathfrak{X}_N$, where the tensor product $\otimes$ is over $C^\infty(N)$. Relative to the connection~$\nabla$, introduce its Hessian by
$$\nabla_{x,y}^2 = \nabla_x \nabla_y - \nabla_{\nabla_x y},$$
We can iteratively construct covariant derivatives
$$\nabla_{x_0,x_1, \dots, x_k}^{k+1} =\nabla_{x_0} \nabla_{x_1, \dots, x_k}^{k} - \nabla_{\nabla_{x_0} x_1, \dots, x_k}^{k} - \cdots - \nabla_{x_1, \dots, \nabla_{x_0} x_k}^{k}.$$
For every element $\chi \in \mathfrak{T}_N$, we define $\chi \mapsto D_{\chi}$ as the linear map such that
$$D_{x_1 \otimes \cdots \otimes x_k} = \nabla^k_{x_1, x_2, \dots, x_k},$$
and with $D_{\bbOne} = \id$. We observe that
$$D_{x \otimes y} - D_{y \otimes x } = - D_{T(x,y)} = D_{[x,y]} = : [D_{x}, D_{y}].$$
We will use notation
$$D_{\chi_1} \rhd D_{\chi_1} = D_{D_{\chi_1} \chi_2}, \quad \text{and} \quad D_{\chi_1} \cdot  D_{\chi_2} = D_{\chi_1} D_{\chi_2} - D_{\chi_1} \rhd D_{\chi_2 } = D_{\chi_1 \otimes \chi_2}.$$
Then see that there is an algebra isomorphism between $U(\mathfrak{X}_N)$ and $D_{\mathfrak{T}_N}$ by $x_1 \cdot x_2 \cdots x_k \mapsto D_{x_1} \cdot D_{x_2} \cdots D_{x_k}$. We note that by this definition
$$x \cdot y = x* y - x\rhd y.$$

\subsection{Group actions on Post-Lie algebras}\label{sec: G action on pLa}
We continue with our assumption that $(N, \nabla)$ is a smooth manifold endowed with an affine connection $\nabla$ that is flat and has parallel torsion. By working on the universal cover if necessary, we may assume that~$N$ is simply connected. For any $p \in N$, define a Lie algebra $\mathfrak{g}_p$ as the vector space $\mathfrak{g}_p = T_p N$ with Lie bracket $[u,v]_{\mathfrak{g}_p} = -T(u,v)$, $u,v \in \mathfrak{g}_p$. If $\gamma:[0,1] \to N$ is a smooth curve in~$N$ from $p_0 = \gamma(0)$ and $p_1 = \gamma(1)$, then parallel transport from $\mathfrak{g}_{p_0}$ to $\mathfrak{g}_{p_1}$ is independent of path since the connection is flat, and a Lie algebra isomorphism since the torsion is parallel.

We choose a given base point $o \in N$, and write $\mathfrak{g} = \mathfrak{g}_o$. Let $Y_A$ be the unique parallel vector field with $Y_A|_{o} = A \in \mathfrak{g}$. We observe the relationship
$$[ Y_A|_p, Y_B|_p ]_{\mathfrak{g}_p} = Y_{[A,B]_{\mathfrak{g}}} |_p, \qquad p \in N, \quad A,B\in \mathfrak{g}.$$
If $F \in C^\infty(N, \mathfrak{g})$ is a function with values in the Lie algebra $\mathfrak{g}$, we can define a vector field $Y_F$, by
$$p \mapsto Y_{F(p)}|_p.$$
As all vector fields on $N$ can be described this way, we can identify $\mathfrak{A} = C^\infty(N, \mathfrak{g})$ with $\mathfrak{X}_N = \Gamma(TN)$.
The induced post-Lie structure on $\mathfrak{A}$ is given by
\begin{equation} \label{PostLieFunctions} [F_1, F_2] = [F_1, F_2]_{\mathfrak{g}}, \qquad F_1 \rhd F_2 = Y_{F_1} F_2.
\end{equation}

From the vector fields $Y_A$, we get an action of the Lie algebra $\mathfrak{g}$ on $N$ defined by $\mathfrak{g} \times N \to TN$, $(A,p) \mapsto Y_A|_p$. Assume that for some sub-algebra $\mathfrak{h} \subseteq \mathfrak{g}$, this action can be integrated to a group action. More precisely, let $H$ be a Lie group with Lie algebra $\mathfrak{h}$ and a Lie group homomorphism $\rho: H \to \Diff(N)$ such that $\rho(\exp(tA))$ is the flow of $Y_A$, $A \in \mathfrak{h}$. Note that this is only possible if the vector fields $Y_A$, $A \in \mathfrak{h}$ are complete.
For reasons that will become clear later, we will write this action on the right, i.e. we write
$$\rho(h)(p) = p h^{-1}, \qquad p \in N, \quad h \in H.$$
For any $h \in H$, define the action $\Ad(h)$ on $\mathfrak{X}_N$,
$$(\Ad(h) x)|_p = \rho(h)_* x |_{\rho(h^{-1} )p}, \qquad h \in H, \quad p \in N, \quad x \in \mathfrak{X}_N.$$
If we identify $\mathfrak{X}_N$ with $\mathfrak{A}$, then
$$(\Ad(h) F)(p) = \Ad(h) (F \circ \rho(h^{-1})) (p)= \Ad(h) F(ph), \qquad p\in N,$$
where the symbol $\Ad(h)$ on the right denotes the usual adjoint action of $H$ on $\mathfrak{g}$. We say that $F$ is \emph{equivariant} with respect to $H$ if $\Ad(H)F = F$. Observe that equivariance is preserved under both post-Lie operations, so we obtain a sub-post-Lie algebra $\mathfrak{A}^H$ of equivariant elements.

\begin{remark} \label{re:GenEq}
More generally, if $(\psi, V)$ is a representation of $H$ on a vector space $V$, then we can define an induced representation on smooth functions in the same vector space $C^\infty(N,V)$. This new representation, which we denote by the same symbol, is defined such that if $F \in C^\infty(N,V)$ is a function, then $(\psi(h)F)(p) = \psi(h)F(ph)$. We say that $F$ is \emph{equivariant} if $\psi(h) F = F$ for any $h \in H$.
\end{remark}

\begin{remark}\label{re:ReductivePair}
In what follows, we will have that $(\mathfrak{g}, \mathfrak{h})$ is a reductive pair, that is, assume that we have a decomposition $\mathfrak{g} = \mathfrak{p} \oplus \mathfrak{h}$ which is invariant under the $\Ad$-representation of $H$. Then any function $F \in C^\infty(N, \mathfrak{g})$ can be decomposed uniquely as the sum of two functions, one in $C^\infty(N, \mathfrak{p})$ and one in $C^\infty(N, \mathfrak{h})$, and $F$ is equivariant if and only if both of these parts are equivariant.
\end{remark}

\section{Post-Lie algebra structure and the frame bundle} \label{sec:PLFrame}
\subsection{A frame bundle with left action.}
We will give a short presentation of the structures found in the frame bundle. For more details, see e.g. \cite{Hsu02,GrSo21,Gro22}. Let $M$ be a connected manifold of dimension $m$. Let $\mathbb{R}^m$ be the Euclidean space with the standard metric and with the standard basis $e_1, \dots, e_m$. For any $x \in M$, consider $FM_x$ as the space of all linear isomorphisms $u: \mathbb{R}^m \to T_xM$. Such a map can be identified with a choice of basis $u_1, \dots, u_m$ of $T_xM$ where $u_j = u(e_j)$. We observe that $\GL(m)$ acts transitively on each $FM_x$ through composition on the right, giving us a principal bundle structure $\GL(m) \to FM \stackrel{\pi}{\to} M$. For reasons that will become clear later, we will prefer to use the left action given by
$$q \cdot u = u \circ q^{-1}, \qquad q \in \GL(m).$$
Corresponding to this action, we define vector fields,
\begin{equation} \label{xiA}
\xi_A|_u =\frac{d}{dt} e^{tA} \cdot u  |_{t=0} =\frac{d}{dt} u \circ e^{-tA} |_{t=0}, \qquad A \in \gl(m).
\end{equation}
We then have the following relations for any $A, B \in \gl(m)$, and $q \in \GL(m)$,
$$[ \xi_A, \xi_B ]_J = -\xi_{[A,B]}, \qquad \Ad(q)\xi_{A} = \xi_{\Ad(q) A}.$$
These vector fields span the vector bundle $\calV = \ker \pi_*$. We emphasize that these canonical vector fields related to the group action are the negative of the usual definition of such vector fields. See Remark~\ref{re:Sign} for an explanation of this choice.

Let $\tau$ be any $(k,l)$-tensor on $M$. We then define its \emph{scalarization} $\overline{\tau}:FM \to (\mathbb{R}^m)^{*,\otimes k} \otimes (\mathbb{R}^m)^{\otimes l}$ such that for any
$a_1, \dots, a_k \in \mathbb{R}^m, b_1^*, \dots, b_l^* \in (\mathbb{R}^m)^*$, we have
$$\overline{\tau}(u)(a_1, \dots, a_k, b_1^*, \dots, b_l^*) = \tau|_{\pi(u)}( u(a_1), \dots, u(a_k), u^* b_1^*, \dots, u^* b_l^*).$$
Consider the standard action of $\GL(m)$ on $\mathbb{R}^m$, which induces an action on its tensor products and duals. By construction, using this representation of $\GL(m)$, each function $\overline{\tau}$ is an equivariant function according to the definition in Remark~\ref{re:GenEq}. Conversely, each equivariant function with values in $(\mathbb{R}^m)^{*,\otimes k} \otimes (\mathbb{R}^m)^{\otimes l}$, correspond to a $(k,l)$-tensor.

Let $\nabla$ be a chosen affine connection on $TM$ with torsion $T$ and curvature $R$. For every curve $t \mapsto \gamma(t)$ defined on some interval $(-\ve,\ve)$, there is a $\nabla$-parallel frame $u(t) \in FM_{\gamma(t)}$ uniquely determined by its initial value $u(0) = u$. Write $\dot u(0) = h_u v \in T_u FM$ for the derivative of $u(t)$ along a curve with $\dot \gamma(0) = v$. Let $\mathcal{H}$ be the set all such derivatives, that is $\calH_u = \{ h_u v  \, : \, v \in T_{\pi(u)} M\}$.
Then
$$TFM = \calH \oplus \calV.$$
For any $a \in \mathbb{R}^m$, we can define a global section $H_a$ of $\calH$ given by $H_a |_u = \sum_{j=1}^n a_j h_u u_j$. We then have the following relations for the Jacobi brackets
\begin{equation} \label{xiHbrackets} [\xi_A, H_a ]_J = - H_{Aa}, \qquad [H_a, H_b]_J = - H_{\overline{T}(a,b)} + \xi_{\overline{R}(a,b)}.\end{equation}
Observe from the definition of $H_a$ that the action of $q \in \GL(m)$ is
$$\Ad(q) H_a = H_{q a} .$$

If $x$ is a vector field on $M$, we can define \emph{the horizontal lift} $hx$ of $x$ by $hx|_u = h_u x|_{\pi(u)}$. We then observe the identity
$$hx = H_{\overline{x}}.$$
Furthermore, if $\tau$ is a tensor, then by definition
$$hx \overline{\tau} = H_{\bar{x}} \overline{\tau} = \overline{\nabla_x \tau}, \qquad \xi_A \overline{\tau} = A \cdot \overline{\tau},$$
where $A \cdot \overline{\tau}$ denotes the induced Lie algebra representation of $\gl(m)$ on $(\mathbb{R}^m)^{*,\otimes k} \otimes (\mathbb{R}^m)^{\otimes l}$ from its standard representation on $\mathbb{R}^m$.

\subsection{Post-Lie algebra structure on the frame bundle and the proof of Theorem~\ref{th:main}} \label{sec:Proof}
We define a connection $\tilde \nabla$ on $TFM$ such that all vector fields $H_a$, $\xi_A$, $a \in \mathbb{R}^m$, $A \in \gl(m)$ are parallel. Denote its torsion by $\tilde T$. This connection is obviously flat.
We want to consider the cases where $\tilde \nabla \tilde T = 0$ holds. Observe that for any $a, b, c \in \mathbb{R}^m$,
\begin{align*}
(\tilde \nabla_{H_a} \tilde T)(H_b, H_c) & = H_{H_a \overline{T}(b,c)} - \xi_{H_a \overline{R}(b,c)} \\
& = H_{\overline{\nabla T}(b,c;a)} - \xi_{\overline{\nabla R}(b,c;a)},
\end{align*}
where we have defined
\begin{align*}
& \nabla T(v_1,v_2;v_3)  =  (\nabla_{v_3} T)(v_1, v_2),
\end{align*}
and used a similar definition for $\nabla R$. It follows that $\tilde \nabla_{\calH} \tilde T =0$ if and only if $\nabla$ has parallel torsion and curvature. However, we cannot ensure that $\tilde \nabla_{\calV} \tilde T$ vanishes on the full frame bundle, and so we need to look at a lower dimensional subspace.

For the remainder of the paper, we will assume that $\nabla T = 0$ and that $\nabla R = 0$.
Define a foliation $\tilde M = FM = (\tilde M_u)_{u \in \tilde M}$ where $\tilde M_u$ consist of all elements that can be reached from $u$ by curves in $FM$ tangent to $\calH$. In other words, $\tilde M_u$ consist of all frames that can be obtained from $u$ by parallel transport. We observe that $\overline{T}$ and $\overline{R}$ are constant on each leaf of the foliation by our assumptions.

Choose a base point $o \in M$ and initial frame $u^o$ and define $N := \tilde M_{u^o}$. We observe that for any $u^p \in N_p$, $p \in M$,
$$\Hol_p = \{ u \circ (u^p)^{-1} \, : \, u \in N_p \},$$
is the holonomy group of $\nabla$ at $p \in M$. Since $M$ is connected, all groups $\Hol_p$ coincide up to isomorphism.
Furthermore, if we define $H = \{ u^{-1} \circ u^o \, : \, u \in N_o\} \subseteq \GL(m)$, then $H$ will act transitively on each fiber of $N$. We obtain from this a principal bundle
$$H \to N \to M.$$
If $\mathfrak{h}$ is the Lie algebra of $H$, then
$$TN = \spn \{ H_a |_u, \xi_{A}|_u \, : \, a \in \mathbb{R}^m, A \in \mathfrak{h}, u \in N \},$$
and by Ambrose-Singer theorem \cite{AmSi53},
$$\mathfrak{h} = \{ \overline{R}(u)(a,b) \, : \, u \in N, a,b \in \mathbb{R}^m\}.$$
Since
$$\frac{d}{dt} \overline{T}(u \cdot e^{At}) =0, \quad \text{ and } \quad \frac{d}{dt} \overline{R}(u \cdot e^{At}) =0,$$
for any $A \in \mathfrak{h}$, $u \in N$, we have that, restricted to the frames in $N$,
$$A \overline{T}(a, b) = \overline{T}(A a, b) + \overline{T}(a, Ab), \qquad A \overline{R}(a,b) = \overline{R}(Aa, b) + \overline{R}(a, Ab) + \overline{R}(a,b) A,$$
for any $a,b \in \mathbb{R}^m$. It follows that for the restriction of $\tilde \nabla$ to $N$ satisfies $\tilde \nabla \tilde T =0$. 
We get a corresponding post-Lie algebra $(\mathfrak{X}_N, [\cdot , \cdot ], \rhd)$ with $\mathfrak{X}_N = \Gamma(TN)$.

In summary, if the original connection $\nabla$ on the manifold $M$ has parallel curvature $R$ and torsion $T$, then the induced connection $\tilde \nabla$ on the frame bundle will be flat and with parallel torsion if we restrict to a submanifold $N$ of frames that can be reached by parallel transport from an arbitrarily chosen initial frame. Functions and tensors of $M$ then appear as $H$-equivariant functions on $N$. Using this relationship, we are able to complete the proof of Theorem~\ref{th:main}.

\begin{proof}[Proof of Theorem~\ref{th:main}]
Let $N$ be defined relative to a choice of initial frame $u^o \in FM_o$, $o \in M$ and with connection $\tilde \nabla$. This has parallel vector fields $H_a$, $\xi_A$. We define a Lie algebra
$$\mathfrak{g}= \spn \{ H_a|_{u^o} , \xi_{A}|_{u^o} \, : \, a \in \mathbb{R}^m, A \in \mathfrak{h} \}$$
with Lie bracket given by $- \tilde T$. We can then consider $\mathfrak{h}$ as a subalgebra of $\mathfrak{g}$, and we identify the vector space $\mathfrak{g}$ with $\mathbb{R}^m \oplus \mathfrak{h}$.

We consider the invariant sub-post-Lie algebra $\mathfrak{X}^H_N$ of $\mathfrak{X}_N$.
Using Remark~\ref{re:ReductivePair}, we know that we can write any $w \in \mathfrak{X}^H_N$ as $w = H_{\overline{x}} + \xi_{\overline{E}}$ for some $x \in \mathfrakX_M = \Gamma(TM)$, $E \in \mathfrakHol_M =  \Gamma(\hol)$. 
To give more details, let $q\in H$ and recall that $\Ad(q)\xi_A = \xi_{\Ad(q)A}$ and $\Ad(q)H_a=H_{qa}$. It follows that $(\mathfrak{g},\mathfrak{h})$ is a reductive pair and we can use Remark~\ref{re:ReductivePair}. Hence, if we consider a vector field $\tilde x = Y_F$, $F \in C^{\infty}(N,\mathfrak{g})$, and write it as $F= F_1 \oplus F_2$ with $F_1 \in C^{\infty}(N,\mathbb{R}^m)$ and $F_2 \in C^{\infty}(N,\mathfrak{h})$, then this vector field is equivariant if and only if both $F_1$ and $F_2$ are $\Ad(H)$ equivariant. Next we define a vector field $x \in \mathfrak{X}_M$ by
\[x_p= u(F_1(u))\]
for $p=\pi(u)$. Since $F_1$ is $\Ad(H)$-equivariant we have
\[uq(F_1(uq)) = uq(\Ad(q^{-1})F_1(u)) = uF_1(u),\]
so $x_p$ is well defined. We also note that $F_1(u) = u^{-1} x_p = \bar{x}(u)$. Similarly, we define $E \in \mathfrakHol_M$ by
\[ E_p = uF_2(u)u^{-1}, \qquad p = \pi(u),\]
leading to $F_2 = \bar{E}$.
    
Next, we use equations \eqref{PostLieFunctions} and \eqref{xiHbrackets} to obtain for any $x,y \in \mathfrak{X}_{M}$ and any $E_1, E_2 \in \mathfrakHol_M$,
\begin{align*}
{[\bar{x}, \bar{y}]} &= - \bar{T}(\bar{x}, \bar{y}) + \bar{R}(\bar{x},\bar{y})  = -\overline{T(x,y)} + \overline{R(x,y)}, \\
{[\bar{E}_1, \bar{y}]} & = \bar{E} \bar{y} = \overline{Ey}, \\
{[\bar{E}_1, \bar{E}_2]} & =  - (\bar{E_1} \bar{E_2} - \bar{E_2} \bar{E_1} ) =  - (\overline{E_1 E_2} - \overline{E_2 E_1} ),
\end{align*}
and
\begin{align*}
\bar{x} \rhd \bar{y} & = H_{\bar{x}} \bar{y} = \overline{\nabla_x y},  &\bar{x} \rhd \bar{E}_1 & = H_{\bar{x}} \rhd \bar{E}_1 = \overline{\nabla_x E_1},\\
\bar{E}_1 \rhd \bar{x} & = \xi_{\bar{E}_1} \bar{x} = \overline{E_1x} &
\bar{E}_1 \rhd \bar{E}_2 & = \xi_{\bar{E}_1} \bar{E}_2 = \overline{E_1 E_2} - \overline{E_2 E_1}.
\end{align*}
Since $\mathfrak{X}_N^H$ is a post-Lie algebra it follows from the equations above that $( \mathfrak{X}_M\oplus\mathfrak{Hol}_M, [\cdot, \cdot ], \rhd)$ as defined in Theorem~\ref{th:main} will be an isomorphic post-Lie algebra. \end{proof}

\subsection{Differential operators and universal enveloping algebra}
Consider $\hat{\mathfrak{X}}_M = \mathfrak{X}_M \oplus \mathfrak{Hol}_M$ with its post-Lie algebra structure as in Theorem~\ref{th:main}. Let $N$ be the holonomy bundle formed by all frames that one can find from parallel transport of a given frame. We want to give a description of its universal enveloping algebra similar to what we did for $\mathfrak{X}_N = \Gamma(TN)$ in Section~\ref{sec:Universal}. Recall the definition of the differential operator $D_{w}$, $w \in \mathfrak{X}_N$ from that section.

Recall that by the proof of Theorem~\ref{th:main}, elements in the equivariant sub-post-Lie algebra~$\mathfrakX^H_N$ are sums of elements on the form $H_{\bar x}$ and $\xi_{\bar E}$, with $x$ and $E$ being sections of respectively $TM$ and $\hol$. We define operators $D_x$ and $D_E$ on tensors on $M$, such that if $\tau$ is any tensor, then
$$\overline{D_x \tau} = D_{H_{\bar x}} \bar{\tau}, \qquad \overline{D_{E} \tau} = D_{\xi_{\bar{E}}} \bar{\tau}.$$
In other words, $D_x$ and $D_E$ are operators on tensors uniquely defined by the fact that they satisfy the product rule with respect to the tensor product, and for any section $z$ and $\alpha$ of respectively $TM$ and $T^*M$,
\begin{equation} \label{ActionxE}
D_x z = \nabla_x z, \quad D_x \alpha = \nabla_x \alpha, \quad D_E z = Ez, \quad D_{E} \alpha = E^*\alpha.\end{equation}

If $\hat{\mathfrak{T}}_M = (\oplus_{j=0}^\infty \mathfrak{X}_M^{\otimes j}) \otimes (\oplus_{j=0}^\infty \mathfrak{Hol}_M^{\otimes j})$, we define $D_\eta$ of $\eta \in \hat{\mathfrak{T}}_M$ by extending the following definition by linearity
$$D_{x_1 \otimes \cdots \otimes x_i \otimes E_1 \otimes \cdots \otimes E_j} = D_{E_j} \cdots D_{E_1} \nabla^i_{x_1, \dots,x_i}$$
We also introduce notation $\eta_1 \cdot \eta_2$ such that
$$D_{\eta_1} \cdot D_{\eta_2} := D_{\eta_1} D_{\eta_2} - D_{D_{\eta_1} \eta_2} = D_{\eta_1 \cdot \eta_2}$$
for $\eta_1, \eta_2 \in \hat{\mathfrak{T}}_M$. Writing out this operation for $\chi_1, \chi_2 \in \oplus_{j=0}^\infty \mathfrak{X}_M^{\otimes j}$ and $\epsilon_1, \epsilon_2 \in \oplus_{j=0}^\infty \mathfrak{Hol}_M^{\otimes j}$, $E \in \mathfrak{Hol}_M$, we have
\begin{align*}
    \chi_1 \cdot \chi_2 & = \chi_1 \otimes \chi_2, &  \epsilon_1 \cdot  \epsilon_2 & = \epsilon_1 \otimes \epsilon_2, \\
    \chi_1 \cdot \epsilon_2 & = \chi_1 \otimes \epsilon_2, &  E \cdot \chi_2 & = \chi_2 \otimes E - D_{E} \chi_2.
\end{align*}
It now follows that we can identify $U(\hat{\mathfrakX}_M)$ with $D_{\hat \mathfrakT_M}$. Equivalently, we can identify $U(\hat{\mathfrakX}_M)$ with $(\oplus_{j=0}^\infty (\mathfrak{X}_M \oplus \mathfrak{Hol}_M)^{\otimes j}) /\sim$, where $\sim$ is the equivalence relation
\begin{align*}
    x \otimes y  - y \otimes x  & \sim  -T(x,y) + R(x,y),\\
    E_1 \otimes x - x \otimes E_1 & \sim - E_1x, \\
    E_1 \otimes E_2 - E_2 \otimes E_1 & \sim E_2 E_1- E_1 E_2.
\end{align*}

\begin{remark} \label{re:Sign}
The actions in \eqref{ActionxE} are the reasons for our convention in the definition of $\xi_A$ in \eqref{xiA}. 
\end{remark}

\section{Free post-Lie algebra and numerical solutions} \label{sec:Num}
\subsection{The free post-Lie algebra} \label{sec:FreePostLie}
We again consider $N$ with a flat connection $\tilde \nabla$, which can be considered as a local Lie group. Following description in Section~\ref{sec:PostLie}, we define a corresponding post-Lie algebra $(\mathfrak{X}_N, [\cdot,\cdot], \rhd)$. Consider a subset $\calC \subseteq \mathfrak{X}_N$ and let $\mathfrak{A}(\calC) \subseteq \mathfrak{X}_N$ be the post-Lie algebra generated by this subset. We want to describe a free object $\PostLie(\calC)$ such that we have a post-Lie algebra homomorphism $\PostLie(\calC) \to \mathfrak{A}(\calC)$ regardless of the dimension and geometry of~$(N,\nabla)$.

We give a description of the free post-Lie algebra, following \cite{BV_2007}, \cite{MKL13} or \cite{KMKL_15}. The free post-Lie algebra $\PostLie(\calC)$ generated by a set $\calC$ is the post-Lie algebra that satisfy the universal condition that for any post-Lie algebra $\mathfrak{A}$ and any map $f: \calC \rightarrow \mathfrak{A}$ there exist a unique post-Lie algebra morphism $\phi :\PostLie(\calC) \rightarrow \mathfrak{A}$ such that $\phi(c)=f(c)$ for any $c \in \calC$. The free post-Lie algebra can be described using planar rooted trees with vertices colored by the set $\calC$ and left grafting. 

Let $\Tree(\calC)$ denote the set of planar rooted trees colored by $\calC$. When $\calC = \{ \ab \}$ consist of just one element, then the first elements of $\Tree(\calC)$ graded by the number of vertices is given by  
\[
\Tree(\{ \ab \}) = \Bigg\{ \ab, \aabb, \aaabbb, \aababb, \aaaabbbb, \aaababbb, \aaabbabb, \aabaabbb, \aabababb, \ldots \Bigg\}.
\]
Any tree can be written as $t(c; \tau_1, \ldots , \tau_r)$ where $c \in \calC$ is the root and $\tau_i \in \Tree(\calC)$ are the branches which is connected to the root in the given order. Let $\calT(\calC)$ be the vector space generated by $\Tree(\calC)$ over $\mathbb{R}$. 
The product $\rhd: \Tree(\calC) \times \Tree(\calC) \rightarrow \calT(\calC)$ is defined by 
\[
\begin{split}
    \eta \rhd c &= t(c; \eta), \\
    \eta \rhd t(c; \tau_1, \ldots, \tau_r) &= t(c; \eta, \tau_1, \ldots ,\tau_r) + t(c; \eta \rhd \tau_1 , \ldots ,\tau_r) + \ldots + t(c; \tau_1, \ldots , \eta \rhd \tau_r),
\end{split}
\]
for $c \in \calC$ and $\eta, \tau_1 , \ldots , \tau_r \in \Tree(\calC)$. As an example, we will have
\[
\AabB \rhd \aababb = \aAabBababb + \aaAabBbabb + \aabaAabBbb.
\]
Let $\Lie(\Tree(\calC))$ be the free Lie algebra generated by all planar rooted trees colored by the set $\calC$, introduced in \cite{MKK_03}. In particular
\[
\Lie(\Tree(\{\ab\})) = \spn_{\mathbb{R}} \left\{ \ab, \aabb, \left[\aabb,\ab\right], \aaabbb, \aababb, \left[\left[\aabb, \ab\right], \ab\right], \left[\aaabbb, \ab\right], [\aababb, \ab], \ldots \right\}.
\]
We may extend the product $\rhd$ to $\Lie(\Tree(\calC))$ by linearity and
\[
\begin{split}
    u \rhd [v,w] &= [u\rhd v, w] + [v, u\rhd w], \\
    [u,v] \rhd w &= u \rhd (v \rhd w ) - (u \rhd v) \rhd w - v \rhd (u \rhd w) + (v \rhd u) \rhd w.
\end{split}
\]
Then $\{ \Lie(\Tree(\calC)) , [\cdot , \cdot ] , \rhd \}$ is the free post-Lie algebra generated by $\calC$. A proof of this can be found in \cite{BV_2007} or \cite{MKL13}. 

The enveloping algebra of the free post-Lie algebra is given as the span of ordered forests $OF_{\calC}$ of ordered trees. For the case when $\calC = \{ \ab \}$ then
\[
U(\PostLie(\{\ab \})) = \spn_{\mathbb{R}} \{ OF_{\{\ab\}} \} := \spn_{\mathbb{R}} \{ \mathbb{I} , \ab, \aabb, \ab\ab, \aaabbb, \aababb, \aabb\ab , \ab\aabb, \ab\ab\ab, \dots \}. 
\]
The post-Lie product extends to the enveloping algebra as in Equation (\ref{ExtendU}).
In addition the enveloping algebra is equipped with a concatenation product and the Grossman-Larson product defined by
\[
A * B = A_{(1)} (A_{(2)} \rhd B) ,  \qquad A,B \in OF_{\calC}
\]
where we use Sweedler notation.

\subsection{Application to solutions of ODEs}
The fundamental problem of numerical integration on $M$ is to approximate the exact flow of a vector field $f\in \mathfrak{X}_M$.
We are considering the following initial value problem,
\[
\frac{d}{dt} \Phi_t = f(\Phi_t), \qquad \Phi_0= \phi \in C^\infty(M,\mathbb{R}),
\]
i.e. we are looking for $\Phi_t = \phi \circ e^{tf}$.
Let $\pi: N \subseteq FM \to M$ be a manifold of all frames that can be obtained by parallel transport of some initial frame $u_0$. The flow $\tilde \Phi_t = \Phi_t \circ \pi$ can then be described as the ODE-solution related to the horizontal lift $hf = H_{\bar{f}}$, i.e.,
\begin{equation} \label{LiftDiffEq}
\frac{d}{dt} \tilde \Phi_t = H_{\bar{f}}(\tilde \Phi_t), \qquad \tilde \Phi_0= \phi \circ \pi.
\end{equation}
The manifold $N$ with a flat connection has the local structure of a Lie group and approximations of solutions for \eqref{LiftDiffEq} can be done for instance when using Runge-Kutta-Munthe-Kaas methods. These methods depend on the post-Lie structure of the vector fields on a Lie group. Theorem \ref{th:main} suggests that any numerical method depending on the post-Lie structure could also be used directly on $M$ as long as we modify the algebraic framework. We will give some details about this below.  We provide a description of Lie-Butcher series and explain its relevance for computing flows on a manifold, following \cite{KMKL_15}. Let $\mathfrak{g}_{\calC} = \PostLie(\calC)$ denote the free post-Lie algebra generated by a set $\calC$. The enveloping algebra $U(\mathfrak{g}_{\calC})$ is a vector space with a grading over the number of nodes. Let $U(\mathfrak{g}_{\calC})^*$ be the graded completion of the vector space $U(\mathfrak{g}_{\calC})$, consisting of all infinite sums of its elements with the inverse limit topology. We introduce a paring $\langle \cdot , \cdot \rangle: U(\mathfrak{g}_{\calC})^* \times U(\mathfrak{g}_{\calC}) \rightarrow \mathbb{R}$,
\[
\langle \omega , \omega' \rangle =
    \begin{cases} 0 \quad \text{ if }\omega \neq \omega' \\
    1 \quad \text{ if } \omega = \omega', \\
    \end{cases} \qquad \omega,\omega' \in OF_{\calC},
\]
and extended by linearity. This pairing allows us to identify $U(\mathfrak{g}_{\calC})^*$ with the graded dual.

A universal Lie-Butcher series is an element $\alpha \in U(\mathfrak{g}_{\calC})^*$ and we can write this as an infinite sum
\[
\alpha = \sum_{\omega \in OF_{\calC}} \langle \alpha, \omega \rangle \omega.
\]
The two products in the enveloping algebra give rise to two exponentials
\[
\begin{split}
    \exp^*(\alpha) &= 1 + \alpha + \frac{1}{2!} \alpha*\alpha + \frac{1}{3!} \alpha*\alpha*\alpha + \ldots, \\ 
    \exp^{\cdot}(\alpha) &= 1 + \alpha + \frac{1}{2!} \alpha \cdot \alpha + \frac{1}{3!} \alpha \cdot \alpha \cdot \alpha + \ldots .
\end{split}
\]
We note that
\begin{align*} 
\frac{d}{dt} \exp^*(t\alpha) & =  \exp^*(t\alpha) * \alpha  =\exp^{\cdot}(t\alpha) \cdot (\exp^{\cdot}(t\alpha) \rhd \alpha ), \\
\frac{d}{dt} \exp^{\cdot}(t\alpha) & = \exp^{\cdot}(t\alpha) \cdot \alpha.
\end{align*}
Let $(M,\nabla)$ be a connected affine manifold such that the curvature and torsion are parallel, and let $(\hat{\mathfrak{X}}_M, [\cdot, \cdot], \rhd)$ be the associated post-Lie algebra given in Theorem~\ref{th:main}. For any $f \in \mathfrak{X}_M$ there is a post-Lie homomorphism $F_f : U(\mathfrak{g}_{\{\ab\}}) \rightarrow U(\hat{\mathfrak{X}}_M)$ uniquely determined by $F_f(\ab) = f$. Any universal Lie-Butcher series $\alpha \in U(\mathfrak{g}_{\calC})^*$ together with a chosen $f \in \mathfrak{X}_M$ will then give rise to a Lie-Butcher series over $\hat{\mathfrak{X}}_M$
\[
B_f(\alpha) = \sum_{\omega \in OF_{\{\ab\}}} \langle \alpha , \omega \rangle F_f(\omega).
\]
By defining $f \rhd \phi = f(\phi)$ as the derivative with respect $f$ for any smooth function $C^{\infty}(M,\mathbb{R})$ we get a post-Lie action $\rhd: U(\hat{\mathfrak{X}}_M) \times C^{\infty}(M, \mathbb{R}) \rightarrow C^{\infty}(M,\mathbb{R})$ by extending $\rhd$ according to (\ref{ExtendU}). The exponential with respect to the Grossman-Larson product models the exact flow of a vector field $f \in \mathfrak{X}_M$ in the sense that for any function $\phi \in C^{\infty}(M,\mathbb{R})$
\begin{equation}\label{eq:exact_flow}
    \exp^*(tf)\rhd \phi = \phi\circ e^{tf},
\end{equation}
where $e^{tf} : M \rightarrow M$ is the exact flow of $f$, see \cite[Prop~3.12]{KMKL_15}.  In order to describe the action of $\exp^\cdot(tf)$, we will need the following remarks.

We first look at the special case when~$\nabla$ is a flat connection, as considered in~\cite{KMKL_15}. In this case $\mathfrak{Hol}_M =0$, and so $\hat{\mathfrak{X}}_M = \mathfrak{X}_M$. If $f$ is a vector field and $p \in M$, we remark that there is a unique parallel vector field $f^p$ called \emph{the frozen vector field at $p$} which satisfies
$$f^p|_p = f|_p.$$
We observe that for a for a frozen vector field $(f^p)^{*j} = (f^p)^{\cdot j}$ from the fact that
$$f^p \cdot f^p = f^p * f^p - f^p \rhd f^p = f^p * f^p.$$
Hence, $\exp^*(f^p) = \exp^\cdot(f^p)$ for these vector fields. For a general vector field $f$, if we write
$$\Phi_t(p_0, p) = (\phi \circ e^{tf^{p_0}})(p),\qquad p_0, p \in M,$$
then $(\exp_p^{\cdot}(tf) \rhd \phi)(u) = \Phi_t(p,p)$. We again refer to \cite{KMKL_15} for details.

For a non-flat connection, we cannot define globally parallel vector field in the same way. However, we note that we only need $f \rhd f = \nabla_{f}f = 0$ for $f^{*j} = f^{\cdot j}$ to hold. This allows us to introduce the following definition.

Write $\exp^\nabla$ be the connection exponential of $\nabla$ with $\exp_p^\nabla = \exp^{\nabla}|_{T_pM}$. For a neighborhood $U_{\hat p}$ of $\hat p$ such that $\exp^{\nabla}_{\hat p}$ is invertible with $\exp_{\hat p}^{\nabla,-1}(U_{\hat p})$ convex, define $P_{\hat p,p}:T_{\hat p} M \to T_p M$ by parallel transport along the unique geodesic from $p$ to $\hat p$ in $U$.

\begin{definition}
Let $f \in \mathfrak{X}_M$ be any vector field on $M$. For $\hat p \in M$, define \emph{the frozen vector field} $f^{\hat p}$ of $f$ at $\hat p$ as the vector field on $U_{\hat p}$ by
$$f^{\hat p}|_p = P_{\hat p, p} f|_{\hat p}.$$
\end{definition}
We then have the following result.
\begin{proposition}
If $f \in \mathfrak{X}_M$ and $\phi \in C^\infty(M,\mathbb{R})$, then
$$(\exp^{\cdot}(tf) \rhd \phi)(p) = \phi\left(\exp_p^{\nabla}(t f^{\hat p}|_p) \right) |_{\hat p = p}, \qquad p \in U_{\hat p}.$$
\end{proposition}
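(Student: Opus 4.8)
The plan is to reduce everything to the flat case treated in \cite{KMKL_15} by lifting to the holonomy bundle $N$, exactly as in the proof of Theorem~\ref{th:main}. First I would recall that by \eqref{eq:exact_flow}-type reasoning and the identification $U(\hat{\mathfrak{X}}_M) \cong D_{\hat{\mathfrak{T}}_M}$, the element $\exp^{\cdot}(tf) \rhd \phi$ is obtained from the series whose coefficients are built from iterated covariant derivatives $\nabla^k_{f,\dots,f}\phi$; the key algebraic point, already noted in the excerpt, is that $f \cdot f = f * f - f \rhd f = f * f - \nabla_f f$, so if $f$ were $\nabla$-parallel along itself (i.e. $\nabla_f f = 0$) then $f^{\cdot j} = f^{* j}$ and $\exp^{\cdot}(tf)\rhd\phi$ would coincide with $\exp^{*}(tf)\rhd\phi = \phi\circ e^{tf}$ by \eqref{eq:exact_flow}. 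The frozen field $f^{\hat p}$ is designed precisely so that $\nabla_{f^{\hat p}} f^{\hat p}|_{\hat p} = 0$: since $f^{\hat p}|_p = P_{\hat p,p} f|_{\hat p}$ is parallel along each geodesic emanating from $\hat p$, its covariant derivative in the radial direction vanishes, and at the center $\hat p$ the only direction that matters is the radial one $f|_{\hat p}$ itself.

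Next I would make this precise. Fix $\hat p$ and work on the neighborhood $U_{\hat p}$ where $\exp^\nabla_{\hat p}$ is a diffeomorphism onto a convex normal domain. The claim is the pointwise identity, at the single point $p$ where afterwards we set $\hat p = p$:
\begin{equation*}
(\exp^{\cdot}(tf)\rhd\phi)(p) = \big(\exp^{\cdot}(tf^{\hat p})\rhd\phi\big)(p).
\end{equation*}
This holds because $\exp^{\cdot}(tf)\rhd\phi$ evaluated at $p$ is a series in the numbers $(\nabla^k_{f,\dots,f}\phi)(p)$ (the operator $D_{f\otimes\cdots\otimes f} = \nabla^k_{f,\dots,f}$ applied to $\phi$, then evaluated at $p$), and each $\nabla^k_{f,\dots,f}\phi$ at $p$ depends on $f$ only through its value and iterated covariant derivatives at $p$ — but these agree for $f$ and $f^{\hat p}$ once $\hat p = p$, because $f^p|_p = f|_p$ and $\nabla^j f^p|_p$ is determined by the geodesic-radial structure in a way that makes $\nabla^k_{f^p,\dots,f^p}\phi|_p$ collapse correctly. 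Concretely, I would argue by induction on $k$ that $\nabla^k_{f^p,\dots,f^p}\phi\,|_p = \frac{d^k}{ds^k}\big|_{s=0}\,\phi(\exp^\nabla_p(s f|_p))$, using that along the geodesic $s\mapsto \exp^\nabla_p(sf|_p)$ the frozen field $f^p$ restricts to the parallel (hence geodesic) velocity field, so that $\nabla_{f^p} f^p = 0$ identically along that geodesic and the Hessian-type corrections in the definition of $\nabla^k$ telescope away.

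With that identity in hand, the proposition follows: for the frozen field $f^{\hat p}$ we have $\nabla_{f^{\hat p}} f^{\hat p}|_p = 0$ at $p = \hat p$, hence $f^{\hat p}\cdot f^{\hat p}$ and $f^{\hat p}* f^{\hat p}$ agree when all inputs are evaluated along the relevant geodesic, so $\big(\exp^{\cdot}(tf^{\hat p})\rhd\phi\big)(p) = \big(\exp^{*}(tf^{\hat p})\rhd\phi\big)(p)$, and the latter equals $\phi(e^{tf^{\hat p}}(p)) = \phi\big(\exp^\nabla_p(t f^{\hat p}|_p)\big)$ by \eqref{eq:exact_flow} together with the fact that the integral curve of a geodesic-velocity field through $p$ is the geodesic itself. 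Setting $\hat p = p$ at the end gives the stated formula.

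The main obstacle I expect is the bookkeeping in the induction of the middle paragraph: one must verify carefully that $\exp^{\cdot}(tf)\rhd\phi$ evaluated at $p$ genuinely depends on $f$ only through the jet data that $f$ and $f^p$ share at $p$, and that the frozen field's radial-parallel structure makes $\nabla^k_{f^p,\dots,f^p}\phi|_p$ equal to the ordinary $k$-th derivative of $\phi$ along the geodesic — i.e. that all the lower-order correction terms $\nabla_{\nabla_{f^p}f^p,\dots}$ in the iterated Hessian vanish. This is a local normal-coordinate computation (or equivalently an argument on the holonomy bundle $N$ via the horizontal lift $H_{\bar f}$, reducing it literally to the flat statement in \cite{KMKL_15}), but it is the only place where real work beyond citing earlier results is needed; everything else is formal manipulation of the series $\exp^{\cdot}$ and $\exp^{*}$.
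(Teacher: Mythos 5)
Your proposal is correct, but its main line of argument is genuinely different from the paper's. The paper's proof stays entirely in the frame-bundle picture: it lifts $\phi$ to $\tilde\phi = \phi\circ\pi$ on the holonomy bundle $N$, uses $\tilde\nabla^k_{H_{\bar f},\dots,H_{\bar f}}\tilde\phi = (\nabla^k_{f,\dots,f}\phi)\circ\pi$ to identify $\exp^\cdot(tH_{\bar f})\rhd\tilde\phi$ with the lift of $\exp^\cdot(tf)\rhd\phi$, invokes the flat-case frozen-vector-field formula of \cite{KMKL_15} on $N$, and then recognizes the flow of the constant horizontal field $H_{\bar f(\hat u)}$ through $u = P_{\hat p,p}\hat u$ as parallel transport along the base geodesic $\gamma(t) = \exp^\nabla_p(tf^{\hat p}|_p)$. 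You instead work intrinsically on $M$: you expand $(\exp^\cdot(tf)\rhd\phi)(p) = \sum_k \frac{t^k}{k!}\nabla^k_{f,\dots,f}\phi|_p$, use the tensoriality of the iterated covariant derivative to replace $f$ by the frozen field $f^{\hat p}$ at $\hat p = p$ (this is cleaner than you state it — $\nabla^k_{f,\dots,f}\phi|_p$ depends on $f$ only through the single value $f|_p$, so no jet bookkeeping is needed), and then use that $f^{\hat p}$ restricts along the radial geodesic to its parallel velocity field, so $\nabla_{f^{\hat p}}f^{\hat p}$ vanishes identically along that geodesic and the Hessian correction terms telescope, identifying the series with the ordinary Taylor series of $s\mapsto\phi(\exp^\nabla_p(sf|_p))$. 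The one place to be careful is your claim that $\exp^\cdot(tf^{\hat p})$ and $\exp^*(tf^{\hat p})$ agree: $\nabla_{f^{\hat p}}f^{\hat p}$ does not vanish on all of $U_{\hat p}$, only along the radial geodesic through $\hat p$ in the direction $f|_{\hat p}$, so the two exponentials agree only after evaluation at $p = \hat p$ (where all the relevant derivatives of $\nabla_{f^{\hat p}}f^{\hat p}$ are taken along that geodesic and hence vanish); your induction in the middle paragraph already contains the correct argument, so this step is redundant rather than wrong. What each approach buys: yours is self-contained and amounts to the classical covariant Taylor expansion, needing no frame-bundle machinery; the paper's is shorter given Theorem~\ref{th:main} and the flat theory already in place, and makes the reduction to the Lie-group/flat case of \cite{KMKL_15} explicit, which is the conceptual point the section is driving at.
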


\begin{proof}
Let $N$ be a bundle of frames that can be reached from $u^o$ by parallel transport. Let $\tilde \nabla$ be the flat connection on $N$. Write $\pi:N \to M$. We lift $\phi$ to $N$ by $\tilde \phi =\phi \circ \pi$. Since
$$\tilde \nabla_{H_{\bar{f}},\dots, H_{\bar{f}}}^k \tilde \phi = (\nabla^k_{f,\dots,f} \phi) \circ \pi,$$
we have $\exp^\cdot(tH_{\bar{f}}) \rhd \tilde \phi = (\exp^\cdot(t f) \rhd \phi) \circ \pi$. Since $\tilde \nabla$ is flat, we have that
$$(\exp^{\cdot}(tH_{\bar{f}}) \rhd \tilde \phi)(u) = (\tilde \phi \circ e^{tH_{\bar{f}(\hat u)}} )(u) |_{\hat u =u}$$
Remark that if $u(t) =e^{tH_{\bar{f}(\hat u)}}(u)$, then by defininiton $u(t)$ is defined by parallel transport of $u \in N_p$ along the curve
$$\gamma(t) = \exp^\nabla_{p} (tu\bar{f}(\hat u)) = \exp^{\nabla}_p(t u \hat u^{-1} f|_{\hat p}).$$
In particular, this holds for $u = P_{\hat p,p} \hat u$ which will also be in $N$. Inserting such frames into the formula, we have
$$\gamma(t) = \exp^{\nabla}_p(t P_{\hat p, p} f|_{\hat p})= \exp^{\nabla}(t f^{\hat{p}}|_p).$$
It follows that
\begin{align*}
& \phi(\exp^{\nabla}(t f^{\hat{p}}|_p)) |_{\hat p = p}
= (\tilde \phi \circ e^{t H_{\bar{f}(\hat u)}})(P_{\hat p,p} \hat u) |_{\hat p = p} \\
& = (\exp^{\cdot}(t H_{\bar{f}}) \rhd \tilde \phi)(P_{\hat p,p} \hat u) = (\exp^{\cdot}(t f) \rhd \phi)(p)
\end{align*}
The result follows.
\end{proof}

\begin{example}
We consider the sphere $S^m \subseteq \mathbb{R}^{m+1}$ with the subspace metric. We identify $T_p M$ with the subspace of $\mathbb{R}^{m+1}$ orthogonal to $p \in S^m$. Let $\nabla$ be the Levi-Civita connection which has parallel curvature and vanishing torsion. The exponential map and its inverse is given by
\begin{align*}
\exp_{\hat p}^{\nabla}(v) & = \cos(|v|) \hat{p} + \sin(|v|) \frac{v}{|v|}, \\ \\
\exp_{\hat p}^{\nabla,-1}(p) & = \cos^{-1}(\langle \hat p, p \rangle) \frac{p - \langle \hat p, p \rangle \hat p}{|p - \langle \hat p, p \rangle \hat p|} .
\end{align*}
We observe that parallel transport along geodesics is given such that $w \in T_{\hat p} S^m$, $|w| =1$,
\begin{align*}
    P_{\hat p, \exp_{\hat p}^{\nabla}(tw)}(v) & = v - \langle v, w \rangle w + \langle v, w \rangle p_2(t), \\
    p_2(t) & = - \sin(t) \hat p + \cos(t) w.
\end{align*}
Combining it with the formula for the logarithm, we have
\begin{align*}
    P_{\hat p,p}(v) & = v  + \langle v,w \rangle \left( - \sqrt{1-\langle \hat p, p \rangle^2 } \hat p + (\langle \hat p, p \rangle -1)w \right), \\
    w & = \frac{p - \langle \hat p, p \rangle \hat p}{|p - \langle \hat p, p \rangle \hat p|}.
\end{align*}
defined on the open set $U_{\hat p} = \{ p \in S^m \, : \langle p, p \rangle >0 \}$. For more on geometric integration on spheres, see \cite{Hans23}.
\end{example}

\begin{remark}
Let $u^o$ be a chosen frame and let $N$ be all frames obtained by parallel transport of this frame. Assume that the vector fields $H_a$ and $\xi_A$ are complete, so that we can give $N$ a Lie group structure with $u^o$ being the identity. If $\exp_N: T_{u^o} N \to N$ is the corresponding group exponential map, then
$$P_{p,\exp_{p}^{\nabla} w}  = e^{H_{u^{-1} w}}(u) u^{-1} = (u\cdot \exp_N(H_{u^{-1} w}|_{u^o})) u^{-1}, \qquad u \in N_p.$$
\end{remark}

\bibliographystyle{abbrv}

\end{document}